\newcommand{\R}     {\mathbb{R}}
\newcommand{\N}     {\mathbb{N}}
\newcommand{\Z}     {\mathbb{Z}}
\newcommand{\T}     {\mathbb{T}}
\renewcommand{\P}   {\mathbb{P}}
\newcommand{\A}     {\mathcal{A}}
\let\S\relax
\newcommand{\S}     {\mathcal{S}}
\newcommand{\V}     {\mathcal{V}}
\newcommand{\E}     {\mathcal{E}}
\newcommand{\diff}  {\mathop{}\!\mathrm{d}}
\newcommand{\rel}   {\mathrm{rel}}
\renewcommand{\c}   {\mathrm{c}}
\renewcommand{\L}   {\mathcal{L}}
\newcommand{\hatL}  {\hat\L}
\newcommand{\hatLgamma}{\hatL^{(\sigma)}}
\let\phi\varphi
\DeclareMathOperator{\spn}  {span}
\DeclareMathOperator{\gap}  {gap}
\DeclareMathOperator{\spec} {spec}
\DeclareMathOperator{\dom}  {Dom}
\DeclarePairedDelimiterX{\norm}[1]{\lVert}{\rVert}{#1}
\theoremstyle{plain}
\newtheorem{theo}{Theorem}
\newtheorem{lemm}[theo]{Lemma}
\theoremstyle{definition}
\newtheorem{defi}[theo]{Definition}
\newtheorem{exam}[theo]{Example}
\newtheorem{rema}[theo]{Remark}
\newtheorem{rema*}{Remark}
\newtheorem{assu}{Assumption}
\crefname{lemm}{lemma}{lemmas}
\crefname{theo}{theorem}{theorems}
\crefname{assu}{assumption}{assumptions}
\crefname{assualph}{assumption}{assumptions}
\title{Convergence rates of self-repelling diffusions on Riemannian manifolds}
\author{Francis Lörler\thanks{E-Mail: \href{mailto:loerler@uni-bonn.de}{loerler@uni-bonn.de}, ORCID: \href{https://orcid.org/0009-0007-3177-1093}{0009-0007-3177-1093}}\medskip\\Institute for Applied Mathematics, University of Bonn.}
\begin{document}

\maketitle

\begin{abstract}
    We study a class of self-repelling diffusions on compact Riemannian manifolds whose drift is the gradient of a potential accumulated along their trajectory. When the interaction potential admits a suitable spectral decomposition, the dynamics and its environment are equivalent to a finite-dimensional degenerate diffusion. We show that this diffusion is a second-order lift of an Ornstein-Uhlenbeck process whose invariant law corresponds to the Gaussian invariant measure of the environment, and immediately obtain a general upper bound on the rate of convergence to stationarity using the framework of second-order lifts. Furthermore, using a flow Poincar\'e inequality, we develop lower bounds on the convergence rate. We show that, in the periodic case, these lower bounds improve upon those of Benaïm and Gauthier \cite{BenaimGauthier2016Selfrepelling}, and even match the order of the upper bound in some cases.

    \begin{samepage}
        \par\vspace{0.5\baselineskip}
        \noindent\textbf{Keywords:} Self-repelling diffusions; hypocoercivity; lift; convergence rate.\par
        \noindent\textbf{MSC Subject Classification:} 58J65, 60J60, 60J55, 60H10, 37A30.
    \end{samepage}
\end{abstract}

\section{Introduction}

We consider a class of self-interacting diffusions $(X_t)_{t\geq 0}$ on compact, connected and oriented smooth Riemannian manifolds $M$. Given a smooth and symmetric interaction potential $V\colon M\times M\to\R$, the process $(X_t)_{t\geq 0}$ solves the stochastic differential equation
\begin{equation}\label{eq:SDE}
    \diff X_t\ =\ -\nabla_M V_t(X_t)\diff t\ +\ \sigma\circ\diff B_t\,,\qquad X_0\sim\mu_0\,,
\end{equation}
where $\sigma\geq 0$ is the diffusivity, $(B_t)_{t\geq 0}$ is a Brownian motion on $M$, and $\nabla_M$ denotes the Riemannian gradient on $M$. The drift is the gradient of the accumulated potential along the trajectory up to time $t$, i.e.\ 
\begin{equation}\label{eq:Vt}
    V_t(x) = \int_0^tV(X_s,x)\diff s = \int_MV(y,x)\mu_t(\diff y)\,,\quad\text{where }\mu_t = \int_0^t\delta_{X_s}\diff s\,,
\end{equation}
and the initial law $\mu_0$ is a probability measure on $M$. The process $(\nabla_MV_t)_{t\geq 0}$ is also known as the environment process.

Such self-repelling processes were originally motivated by and introduced as models for polymer growth, notably by Norris, Rogers and Williams \cite{NorrisRogersWilliams1987Selfavoiding,NorrisRogersWilliams1985Excluded}, where the asymptotic growth of the polymer is of particular interest. In case $M=\R^d$, the asymptotic behaviour in the form of strong laws of large numbers has been studied in the works \cite{DurrettRogers1992Asymptotic,CranstonMountford1996BrownianPolymer,MountfordTarres2008BrownianPolymer}. In case that $M$ is compact, as considered here, one expects the law of $X_t$ to converge to an invariant probability measure. Indeed, under a variety of different assumptions, such a convergence has been shown using the normalised occupation time measure $\frac{1}{t}\mu_t$ in place of $\mu_t$ in the series of works \cite{BenaimLedouxRaimond2002Selfinteracting,BenaimOlivier2003SelfinteractingII,BenaimOlivier2005SelfinteractingIII}, and with the non-normalised occupation time measure in \cite{BenaimGauthier2016Selfrepelling,BCG2015selfrepelling}. 

Self-interacting processes such as the process $(X_t)_{t\geq 0}$ are not Markovian since they depend on their own history through their occupation measure, rendering their study notoriously difficult. However, the joint process $(V_t,X_t)_{t\geq 0}$ is a Markov process, and it has been shown by Tarr{\`e}s, T{\'o}th and Valk{\'o} \cite{TarresTothValko2012Diffusivity} that, under suitable assumptions on the interaction potential $V$, the combined process $(x\mapsto\nabla_M V_t(X_t+x))_{t\geq 0}$, known as the environment as seen from the particle, admits an ergodic invariant Gaussian measure. Following on this work, Benaïm and Gauthier \cite{BenaimGauthier2016Selfrepelling} considered the case that the interaction potential admits a finite diagonal decomposition in terms of eigenfunctions of the Laplace-Beltrami operator $\Delta_M$, see \Cref{assu:V} below. In this case, the process is \emph{self-repelling}, and \eqref{eq:SDE} is equivalent to a finite-dimensional system of stochastic differential equations whose solution is a degenerate, non-reversible diffusion process on $\R^n\times M$, with invariant probability measure given by the product of a Gaussian and the uniform measure on $M$, as shown by \cite{BenaimGauthier2016Selfrepelling}.

Even though the invariant probability measure admits a simple and explicit description, obtaining rates of convergence of the law of the process to stationarity is challenging due to its non-reversibility and degeneracy. Classical analytical techniques based on Poincar\'e inequalities and spectral estimates cannot be applied in the non-reversible, degenerate setting, necessitating the development of completely new techniques in this area of research that has become known as hypocoercivity \cite{Villani2009Hypocoercivity}. In the seminal work \cite{DMS2015Hypocoercivity}, Dolbeault, Mouhot and Schmeiser introduced a framework for the study of a large class of hypocoercive kinetic equations. This framework was further developed by \cite{GrothausStilgenbauer2014Hypocoercivity} and then applied to self-repelling diffusions on Riemannian manifolds by Benaïm and Gauthier \cite{BenaimGauthier2016Selfrepelling}. The DMS approach of \cite{DMS2015Hypocoercivity}, however, fails to provide rates of convergence to stationarity that are sharp in terms of the model parameters in many settings. Indeed, for Langevin dynamics (or the associated kinetic Fokker-Planck equation), a prototypical non-reversible degenerate dynamics, sharp rates of convergence have only recently been established in \cite{Cao2019Langevin} using a variational approach to hypocoercivity based on space-time Poincar\'e inequalities, introduced by Albritton, Armstrong, Mourrat and Novack \cite{Albritton2021Variational}. This approach was further developed and made broadly applicable using the framework of second-order lifts \cite{EberleLoerler2024Lifts,EGHLM2025Convergence} that allows to prove such space-time Poincar\'e inequalities by relating the non-reversible dynamics to simpler, reversible diffusions. 

In this work, we identify the self-repelling diffusion and its environment as a second-order lift of an Ornstein-Uhlenbeck process with invariant law corresponding to the Gaussian measure that is the invariant law of the environment process. This provides a new perspective on these processes and allows us to apply the framework of non-reversible lifts of Markov processes \cite{EberleLoerler2024Lifts, EGHLM2025Convergence} to obtain upper and lower bounds on their rate of convergence to equilibrium. Our upper bounds improve on those of \cite{BenaimGauthier2016Selfrepelling} obtained using the DMS approach, as demonstrated by an example on the torus, and are even sharp in some settings.

\medskip

We begin by introducing self-interacting diffusions and the equivalent finite-dimensional system in \Cref{sec:setting} and show in \Cref{sec:lifts} that, under \Cref{assu:V}, self-repelling diffusions on Riemannian manifolds are second-order lifts of an Ornstein-Uhlenbeck process on $\R^n$ with Gaussian invariant probability measure. The resulting upper and lower bounds on the $L^2$-relaxation time of the associated transition semigroups are given in \Cref{ssec:relaxationtimes}. We compare our upper bounds to those of \cite{BenaimGauthier2016Selfrepelling} in the periodic case in \Cref{ssec:periodic}. Finally, we conclude by giving the proof of the upper bounds on the relaxation time in \Cref{sec:convergence}.

\section{Self-interacting diffusions on Riemannian manifolds}\label{sec:setting}

Consider a compact, connected and oriented smooth Riemannian manifold $(M,g)$ without boundary and a smooth, symmetric function $V\colon M\times M\to\R$. A \emph{self-interacting diffusion with potential $V$} is the $M$-valued continuous-time stochastic process $(X_t)_{t\geq 0}$ on some probability space $(\Omega,\A,\P)$ that solves the stochastic differential equation
\begin{equation*}
    \diff X_t\ =\ -\nabla_M V_t(X_t)\diff t\ +\ \sigma\circ\diff B_t\,,\qquad X_0\sim\mu_0\,,
\end{equation*}
Here $\sigma\geq 0$ is the diffusivity and $(B_t)_{t\geq 0}$ is a Brownian motion on $M$, the drift is the gradient of the accumulated potential as given in \eqref{eq:Vt}, and the initial law $\mu_0$ is a probability measure on $M$. In the following, we let $\nu$ denote the Riemannian volume measure on $M$. We will work under the following assumption.
\begin{assu}\label{assu:V}
    $V$ takes the form
    \begin{equation}\label{eq:VinONS}
        V(x,y) = \sum_{k=1}^na_ke_k(x)e_k(y),
    \end{equation}
    where $(e_1,\dots,e_n)$ is a system of $L^2(M,\nu)$-orthonormal eigenfunctions of the Laplace-Beltrami operator $\Delta_M$ on $M$, with associated non-positive eigenvalues $\lambda_1,\dots,\lambda_n$, and $a_1,\dots,a_n\in [0,\infty)$.
\end{assu}
\Cref{assu:V} ensures that $V$ is a symmetric and positive semi-definite kernel, i.e.\ 
\begin{equation*}
    \int_{M}\int_MV(x,y)f(x)f(y)\ \nu(\diff x)\nu(\diff y)\geq 0
\end{equation*}
for all $f\in L^2(M,\nu)$, so that the process $(X_t)_{t\geq 0}$ is \emph{self-repelling}. Moreover, as shown in \cite{BenaimGauthier2016Selfrepelling}, the process $(\nabla_M V_t,X_t)_{t\geq 0}$ admits an explicit invariant probability measure supported on
\begin{equation*}
    \spn\bigl\{\nabla_M e_k: k\in\{1,\dots,n\}\bigr\}\times M\ \subseteq\ \mathfrak{X}(M)\times M\,,
\end{equation*}
where $\mathfrak{X}(M)$ denotes the set of all smooth vector fields on $M$, see \Cref{ssec:finitedim} below.

\begin{rema}\label{rem:localtime}
    If $M=\T^d$, $\mu_t$ has a density $L_t$ with respect to the volume measure and $V(x,y) = W(x-y)$ for some symmetric function $W\colon \T^d\to\R$, the accumulated potential simplifies to
    \begin{equation*}
        V_t(x) = \int_{\T^d}W(y-x)L_t(y)\,\nu(\diff y) = (W*L_t)(x).
    \end{equation*}
    In this case, $(X_t)$ is a diffusion driven by the mollified negative gradient of its own local time density $L_t$. In the non-compact case $M=\R^d$, corresponding processes are known as self-repelling Brownian polymers \cite{DurrettRogers1992Asymptotic}, which have recently been constructed even in the singular case $W = \delta_0$ \cite{GilesGraefner2025SRBP1d}. They are phenomenologically closely related to the true self-repelling motion introduced by T\'oth and Werner \cite{TothWerner1998TSRM}.
\end{rema}

\subsection{Reduction to a finite-dimensional system}\label{ssec:finitedim}

In this section, we follow the construction in \cite[Section 2]{BenaimGauthier2016Selfrepelling} that shows how the structure of $V$ given in \Cref{assu:V} allows to reduce the self-repelling diffusion and its environment to a system of stochastic differential equations in $\R^n\times M$.

\Cref{assu:V} allows us to decompose
\begin{equation*}
    \nabla_M V_t(x)\ =\ \sum_{k=1}^na_k\nabla_M e_k(x)U^{(k)}_t\qquad\text{with}\quad U^{(k)}_t=\int_0^te_k(X_s)\diff s\,.
\end{equation*}
The self-repelling diffusion \eqref{eq:SDE} and its environment are hence equivalent to the system
\begin{align*}
    \diff U^{(k)}_t\ &=\ e_k(X_t)\diff t\,,\qquad k=1,\dots,n\\
    \diff X_t\ &=\ -\sum_{k=1}^na_k\nabla_M e_k(X_t)U^{(k)}_t\diff t\ +\ \sigma\circ\diff B_t
\end{align*}
of stochastic differential equations on $\R^n\times M$, where $\circ\diff B_t$ denotes Stratonovich integration. Let $U_t = (U_t^{(k)})_{k=1,\dots,n}$ and $e(x) = (e_k(x))_{k=1,\dots,n}$,
and introduce the potential
\begin{equation*}
    \Phi(u)\ =\ \frac{1}{2}\sum_{k=1}^na_k|\lambda_k|u_k^2\,,\qquad u\in\R^n.
\end{equation*}
Then this system can equivalently be written as
\begin{equation}
    \begin{aligned}\label{eq:XtUtSDE}
        \diff U_t\ &=\ e(X_t)\diff t\\
        \diff X_t\ &=\ -\nabla\Phi(U_t)^\top\Lambda^{-1}\nabla_M e(X_t)\diff t\ +\ \sigma\circ\diff B_t\,,\\
    \end{aligned}
\end{equation}
where  $\Lambda = \mathrm{diag}(|\lambda_1|,\dots,|\lambda_n|)\in\R^{n\times n}$ and $\nabla_M e(x) = (\nabla_M e_k(x))_{k=1,\dots,n}$. The drift is thus to be understood as
\begin{equation*}
    \nabla \Phi(U_t)^\top\Lambda^{-1}\nabla_M e(X_t) = \sum_{k=1}^n\partial_k\Phi(U_t)|\lambda_k|^{-1}\nabla_M e_k(X_t)\,.
\end{equation*}
Note that, to avoid confusion, we use $\nabla$ for the Euclidean gradient in $\R^n$ and $\nabla_M$ for the gradient on $M$.

We summarise some properties of the system \eqref{eq:XtUtSDE} that are given in \cite{BenaimGauthier2016Selfrepelling} in the following lemma.

\begin{lemm}\label{lem:invariant}
    \begin{enumerate}[(i)]
        \item There exists a unique global strong solution $(U_t,X_t)_{t\geq 0}$ to \eqref{eq:XtUtSDE} for any initial condition $(U_0,X_0)=(u,x)\in\R^d\times M$.

        \item The probability measure $$\hat\mu = \mu\otimes\kappa$$ with
        \begin{equation*}
            \mu(\diff u) = \frac{1}{Z}\exp(-\Phi(u))\diff u\qquad\text{and}\qquad\kappa(\diff x) = \frac{1}{\nu(M)}\nu(\diff x),
        \end{equation*}
        where $Z$ is a normalising constant, is invariant for this process. If $\sigma>0$, it is the unique invariant probability measure.
    \end{enumerate}
\end{lemm}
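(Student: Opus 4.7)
The strategy has two parts, corresponding to the two items of the lemma; both reduce to routine computations once the generator is identified, with uniqueness of the invariant law being the only non-trivial step.

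For (i), I would observe that both coefficients of \eqref{eq:XtUtSDE} are smooth: the coefficient of $\diff U_t$ depends only on $x$ through the smooth bounded functions $e_k$, while the coefficient of $\diff X_t$ is smooth in $(u,x)$. Standard theory for SDEs on compact manifolds gives a unique local strong solution. Since $X_t$ lives in the compact manifold $M$ and $|e(X_t)|$ is globally bounded, the $U$-component satisfies the deterministic a priori bound $|U_t|\leq|U_0|+t\sup_M|e|$, so no explosion can occur and the solution is global.

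For invariance in (ii), the plan is to write out the generator
\[
    \L f(u,x)\ =\ \sum_{k=1}^n e_k(x)\,\partial_{u_k}f\ -\ \sum_{k=1}^n a_ku_k\,\langle\nabla_M e_k(x),\nabla_M f\rangle_g\ +\ \frac{\sigma^2}{2}\Delta_M f,
\]
using the simplification $\nabla\Phi(u)^\top\Lambda^{-1}\nabla_M e(x)=\sum_k a_ku_k\nabla_M e_k(x)$, and then to verify $\int\L f\,\diff\hat\mu=0$ for smooth test functions $f$. Integration by parts on the closed manifold $M$ against $\kappa$ kills the $\Delta_M f$ term and, using $\Delta_M e_k=-|\lambda_k|e_k$, converts the middle term into $-\sum_k a_k|\lambda_k|u_k\int_M e_kf\,\diff\kappa$. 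Integration by parts on $\R^n$ against the Gaussian $\mu$, using $\partial_{u_k}\log(\diff\mu/\diff u)=-a_k|\lambda_k|u_k$, converts the first term into $+\sum_k a_k|\lambda_k|u_k\int_M e_kf\,\diff\kappa$. The two contributions cancel exactly, proving invariance.

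The more delicate step is uniqueness when $\sigma>0$, which I expect to be the main obstacle. The plan here is to combine strong Feller regularity with irreducibility. Hypoellipticity of $\L$ would follow from H\"ormander's condition, since the noise is already elliptic in the $X$-direction on the compact factor $M$ and the $U$-component can be moved by combining the drift field $e(x)\cdot\partial_u$ with the diffusion in $X$ via iterated Lie brackets. Irreducibility on $\R^n\times M$ would be established by a support-theorem argument for \eqref{eq:XtUtSDE}, steering $X_t$ along a suitable path in $M$ so that the deterministic integral $U_t=U_0+\int_0^te(X_s)\,\diff s$ reaches any target in $\R^n$. Once smooth transition densities and irreducibility are in hand, uniqueness of the invariant probability measure is classical. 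As the whole statement is attributed to \cite{BenaimGauthier2016Selfrepelling}, I would simply quote their version of this argument for this final step.
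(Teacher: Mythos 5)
The paper's own \enquote{proof} of this lemma is a one-line citation to Benaïm and Gauthier (their Proposition~1 and Theorem~5), so your proposal is not an alternative route but an unpacking of that citation; the unpacking is essentially correct and consistent with what one finds in that reference. Part (i) is exactly as you say: the coefficients are smooth, $X_t$ lives on a compact manifold, and the deterministic a priori bound $|U_t|\leq|U_0|+t\sup_M|e|$ rules out explosion. For part (ii), your integration-by-parts cancellation is the right computation: against $\mu$ the term $e_k(x)\partial_{u_k}f$ produces a factor $a_k|\lambda_k|u_k e_k(x)$, while against $\kappa$ on the closed manifold the term $-a_ku_k\langle\nabla_Me_k,\nabla_Mf\rangle$ produces $-a_k|\lambda_k|u_k e_k(x)$ via $\Delta_Me_k=-|\lambda_k|e_k$, and these cancel term by term, with the $\Delta_Mf$ contribution vanishing on its own. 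Your strategy for uniqueness (hypoellipticity via iterated brackets plus irreducibility via a support-theorem argument) is indeed the one used in the cited reference. The only place you gloss over a real subtlety is hypoellipticity: the bracket of a vector field in the $M$-direction with the drift $e(x)\cdot\partial_u$ yields $\nabla_Me_k$-components in the $u$-direction, and one must check that these, together with higher brackets, span $\R^n$ at every $x\in M$ --- this is not automatic for an arbitrary orthonormal family of eigenfunctions and is precisely where the structure of the $e_k$ enters. Since you explicitly defer this final step to the reference, which is also what the paper does, the proposal stands.
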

\begin{proof}
    This is the content of \cite[Proposition 1]{BenaimGauthier2016Selfrepelling} and \cite[Theorem 5]{BenaimGauthier2016Selfrepelling}.
\end{proof}

Note that $\Phi$ is quadratic, so that 
\begin{equation*}
    \mu\ =\ \bigotimes_{k=1}^n\mathcal{N}\big(0,\tfrac{1}{a_k|\lambda_k|}\big)
\end{equation*}
is a Gaussian distribution. If $\sigma=0$ the invariant probability measure is possibly not unique, as there may exist infinitely many ergodic probability measures; see \cite[Theorem 3]{BenaimGauthier2016Selfrepelling} for an explicit example on the torus.

By \Cref{lem:invariant}, the process $(U_t,X_t)$ induces a strongly continuous contraction semigroup $(\hat P_t^{(\sigma)})_{t\geq 0}$ on $L^2(\hat\mu)$, and we denote its generator by $(\hatLgamma,\dom(\hatLgamma))$. By \cite[Propositions 2 and 3]{BenaimGauthier2016Selfrepelling}, the set $C_\c^2(\R^n\times M)$ of compactly supported, twice differentiable functions on $\R^n\times M$ is a core for the generator, and on these functions it acts as
\begin{equation}\label{eq:generator}
    \begin{aligned}
        \hatLgamma f(u,x)
        &\ =\ \sum_{k=1}^n\big(e_k(x)\partial_{u_k}f(u,x) - a_ku_k\langle\nabla_M e_k(x),\nabla_Mf(u,x)\rangle\big) + \frac{\sigma^2}{2}\Delta_Mf(u,x)\\
        &\ =\ e(x)^\top\nabla f(u,x) - \big\langle\nabla\Phi(u)^\top\Lambda^{-1}\nabla_M e(x),\nabla_Mf(u,x)\big\rangle + \frac{\sigma^2}{2}\Delta_Mf(u,x)\,.
    \end{aligned}
\end{equation}

\begin{exam}\label{ex:torus}
    Consider $M = \T_L = \R/(2\pi L\Z)$ the circle with circumference $2\pi L$ and
    \begin{equation*}
        W(z) = \frac{1}{\pi L}\sum_{k=1}^na_k\cos(kz/L)\,,\qquad z\in\T_L\,,
    \end{equation*}
    where $a_1,\dots,a_n\in[0,\infty)$.
    Then setting $V(x,y) = W(x-y)$ yields
    \begin{equation*}
        V(x,y) = \frac{1}{\pi L}\sum_{k=1}^na_k\cos(kx/L)\cos(ky/L)+a_k\sin(kx/L)\sin(ky/L)\,,
    \end{equation*}
    which is of the form \eqref{eq:VinONS} with the orthonormal functions $e_k(x) = (\pi L)^{-1/2}\cos(kx/L)$ and $f_k(x) =(\pi L)^{-1/2}\sin(kx/L)$. The associated eigenvalues are $\lambda_k = -\frac{k^2}{L^2}$. The potential $\Phi$ then becomes
    \begin{equation*}
        \Phi(u,v) = \frac{1}{2L^2}\sum_{k=1}^na_kk^2(u_k^2+v_k^2)\,,\qquad u,v\in\R^n\,,
    \end{equation*}
    where $u_k$ and $v_k$ correspond to the eigenfunctions $e_k$ and $f_k$. We will return to this example in \Cref{ssec:periodic}.
\end{exam}

\section{Relaxation times of self-interacting diffusions}\label{sec:lifts}

The process $(U_t,X_t)_{t\geq 0}$ is non-reversible and subject to degenerate noise acting only on $X_t$. Therefore, studying the rate of convergence of the law of $(U_t,V_t)$ to its invariant distribution $\hat\mu$ is a challenging task and falls within the field of hypocoercivity. In this section, we will apply the recently developed framework of second-order lifts and flow Poincar\'e inequalities to obtain upper and lower bounds on this rate of convergence in the form of bounds on the $L^2$-relaxation time.

\smallskip

We begin by recalling the concept of second-order lifts in \Cref{ssec:deflifts}, and show how self-repelling diffusions and their environment can be seen as lifts of an Ornstein-Uhlenbeck process with invariant measure $\mu$ in \Cref{ssec:selfrepellinglift}. We then apply the framework of \cite{EGHLM2025Convergence} to obtain upper and lower bounds on the $L^2$-relaxation time of the transition semigroups associated to the self-repelling diffusions in \Cref{ssec:relaxationtimes}. These are compared to those obtained by \cite{BenaimGauthier2016Selfrepelling} in the periodic case in \Cref{ssec:periodic}.

\subsection{Second-order lifts}\label{ssec:deflifts}

Inspired by the notion of lifts of Markov chains in discrete time \cite{Diaconis2000Lift,Chen1999Lift}, second-order lifts of reversible diffusions were introduced in \cite{EberleLoerler2024Lifts} as a related counterpart in continuous time and space. They provide a broadly applicable and systematic approach developed in \cite{EberleLoerler2024Lifts,EberleLoerler2024Divergence,EGHLM2025Convergence} to proving convergence to equilibrium for many non-reversible dynamics by relating them to simpler, reversible diffusions.

Intuitively, a time-homogeneous Markov process $(\hat Z_t)_{t\geq 0}$ on a product space $\S\times\V$ with invariant probability measure $\hat\mu = \mu\otimes\kappa$ is a second-order lift of another Markov process $(Z_t)_{t\geq 0}$ with state space $\S$ and invariant probability measure $\mu$ if their associated transition semigroups $(\hat P_t)$ and $(P_t)$ in $L^2(\hat\mu)$ and $L^2(\mu)$, respectively, satisfy
\begin{equation*}
    \int_\V\hat P_t(f\circ\pi)(x,v)\kappa(\diff v) = (P_{t^2}f)(x) + o(t^2)\qquad\text{as }t\downarrow0
\end{equation*}
for functions $f\colon\S\to\R$ in the domain of the generator $\L$ of $(P_t)$. Here $\pi\colon\S\times\V\to\S$ denotes the canonical projection $\pi(x,v)=x$. Considering a second-order Taylor expansion around $t=0$ leads to the precise definition below, see \cite{EberleLoerler2024Lifts} for more intuition behind this property and some examples. Let $(\L,\dom(\L))$ and $(\hatL,\dom(\hatL))$ denote the generators of $(P_t)$ and $(\hat P_t)$ in $L^2(\mu)$ and $L^2(\hat\mu)$, respectively. We denote the Dirichlet form associated to $\L$ by $\E$, it is the extension of 
\begin{equation}\label{eq:DiriForm}
    \E(f,g)=-\int_\S f\L g\diff\mu
\end{equation}
to a closed symmetric bilinear form with domain $\dom(\E)$ given by the closure of $\dom(\L)$ with respect to the norm $\|f\|_{L^2(\mu)} + \E(f,f)^{1/2}$.

\begin{defi}[Second-order lifts \cite{EberleLoerler2024Lifts}]\label{def:lift}
    The process $(\hat Z_t)$ is a \emph{second-order lift} of $(Z_t)$ if there exists a core $C$ of $(\L,\dom(\L))$ such that
    \begin{align}\label{eq:deflift0}
        f \circ \pi \in \dom(\hatL) \text{ for all } f \in C
    \end{align}
    and for all $f,g\in C$ we have
    \begin{equation}\label{eq:deflift1}
         \big< \hatL (f \circ \pi), g \circ \pi \big>_{L^2(\hat\mu)} = 0,
    \end{equation}
    and
    \begin{equation}\label{eq:deflift2}
        \frac{1}{2}\big< \hatL (f \circ \pi), \hatL(g \circ \pi) \big>_{L^2(\hat\mu)} = -\left< f, \L g\right>_{L^2(\mu)} = \mathcal E(f, g).
    \end{equation}
    Conversely, we refer to the process $(Z_t)$ as the \emph{collapse} of $(\hat Z_t)$.
\end{defi}

We also say that $(\hatL,\dom(\hatL))$ or $(\hat P_t)$ is a second-order lift of $(\L,\dom(\L))$ or $(P_t)$, respectively. In the rest of this paper, we simply refer to second-order lifts as lifts.

\subsection{Self-interacting diffusions as second-order lifts}\label{ssec:selfrepellinglift}

In this section, we show that the process $(U_t,X_t)_{t\geq 0}$ that is equivalent to the self-repelling diffusion and its environment is a second-order lift of an Ornstein-Uhlenbeck process on $\R^n$ for any choice of diffusivity.
To this end, we consider the transition semigroup $(\hat P_t^{(\sigma)})_{t\geq 0}$ of $(U_t,X_t)_{t\geq 0}$ with diffusivity $\sigma$ on $L^2(\hat\mu)$, and recall that,
for compactly supported functions $f\in C_\c^2(\R^n\times M)$, the associated generator is given by
\begin{align*}
    \hatLgamma f(u,x)
    &=\sum_{k=1}^n\big(e_k(x)\partial_{u_k}f(u,x) - a_ku_k\langle\nabla_Me_k(x),\nabla_Mf(u,x)\rangle\big) + \frac{\sigma^2}{2}\Delta_Mf(u,x)\\
    &=e(x)^\top\nabla f(u,x) - \big\langle\nabla\Phi(u)^\top\Lambda^{-1}\langle\nabla_Me(x),\nabla_Mf(u,x)\big\rangle + \frac{\sigma^2}{2}\Delta_Mf(u,x)\,,
\end{align*}
where $\Lambda = \mathrm{diag}(|\lambda_1|,\dots,|\lambda_n|)$.
In the following, we let $\pi(u,x) = u$,
\begin{equation*}
    (\hatL,\dom(\hatL)) = (\hatL^{(0)},\dom(\hatL^{(0)}) \quad\text{and}\quad (\hat P_t)_{t\geq 0} = (\hat P_t^{(0)})_{t\geq0}\,.
\end{equation*}
We will show that the self-repelling diffusion $(U_t,X_t)_{t\geq 0}$ is a second-order lift of the reversible diffusion process $(Z_t)_{t\geq 0}$ on $\R^n$ that solves the stochastic differential equation
\begin{equation}\label{eq:OU}
    \diff Z_t\ =\ -\frac{1}{2\nu(M)}\nabla\Phi(Z_t)\diff t + \frac{1}{\sqrt{\nu(M)}}\diff W_t\,,
\end{equation}
where $(W_t)_{t\geq 0}$ is an $n$-dimensional standard Brownian motion. 
We denote the transition semigroup of $(Z_t)_{t\geq 0}$ on $L^2(\mu)$ by $(P_t)_{t\geq 0}$. Its generator $(\L,\dom(\L))$ is given by
\begin{equation}\label{eq:OUgenerator}
    \L f\ =\ -\frac{1}{2\nu(M)}\nabla^*\nabla f\ =\ -\frac{1}{2\nu(M)}\nabla\Phi\cdot\nabla f + \frac{1}{2\nu(M)}\Delta f,
\end{equation}
where the adjoint is in $L^2(\mu)$, and $\dom(\hatL) = H^{2,2}(\R^n,\mu)$. Note that \eqref{eq:OU} is an Ornstein-Uhlenbeck process on $\R^n$ with invariant probability measure $\mu$, slowed down by a factor $\frac{1}{2\nu(M)}$. In particular, the spectral gap of the generator in $L^2(\mu)$ is
\begin{equation}\label{eq:OUgap}
    \gap(\L) = \frac{1}{2\nu(M)}\min\bigl\{a_k|\lambda_k| : k\in\{1,\dots,n\}\bigr\}\,,
\end{equation}
see \cite{Metafune2002Spectrum}.

\begin{theo}\label{thm:lift}
    The semigroup $(\hat P_t^{(\sigma)})_{t\geq 0}$ is a second-order lift of $(P_t)_{t\geq 0}$ for any choice of diffusivity $\sigma\geq 0$.
\end{theo}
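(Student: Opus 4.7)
The plan is to verify the three conditions in \Cref{def:lift} directly, using as core the space $C = C_\c^\infty(\R^n)$, which is standard for the Ornstein-Uhlenbeck generator $(\L,\dom(\L))$ on $L^2(\mu)$. Since $\pi(u,x) = u$ and $M$ is compact, for any $f \in C$ the function $f \circ \pi$ is compactly supported on $\R^n \times M$, hence lies in $C_\c^2(\R^n \times M) \subseteq \dom(\hatLgamma)$, giving \eqref{eq:deflift0}. Because $\nabla_M(f\circ\pi) = 0$ and $\Delta_M(f\circ\pi) = 0$, the generator formula \eqref{eq:generator} reduces to
\begin{equation*}
    \hatLgamma(f \circ \pi)(u,x) \ =\ e(x)^\top \nabla f(u),
\end{equation*}
and in particular this expression is independent of the diffusivity $\sigma$, so the entire argument will automatically be uniform in $\sigma \geq 0$.

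To obtain the antisymmetry identity \eqref{eq:deflift1}, I would compute
\begin{equation*}
    \bigl\langle \hatLgamma(f \circ \pi), g \circ \pi \bigr\rangle_{L^2(\hat\mu)}\ =\ \int_{\R^n} g(u)\nabla f(u)^\top \biggl(\int_M e(x)\,\kappa(\diff x)\biggr)\mu(\diff u),
\end{equation*}
and use the orthogonality $\int_M e_k\diff\kappa = 0$, which holds for every $k$ because $e_k$ is orthogonal in $L^2(M,\nu)$ to the constant eigenfunction; this requires $\lambda_k \neq 0$, but the case $\lambda_k = 0$ is excluded by the requirement that $\mu$ be a probability measure.

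For the energy identity \eqref{eq:deflift2}, the $L^2(M,\nu)$-orthonormality of the $e_k$'s combined with the normalisation of $\kappa$ gives $\int_M e(x)e(x)^\top\kappa(\diff x) = \nu(M)^{-1} I_n$, so that
\begin{equation*}
    \bigl\langle \hatLgamma(f \circ \pi), \hatLgamma(g \circ \pi) \bigr\rangle_{L^2(\hat\mu)}\ =\ \frac{1}{\nu(M)}\int_{\R^n} \nabla f \cdot \nabla g \diff\mu.
\end{equation*}
On the other hand, integration by parts against the Gaussian $\mu$ and the expression \eqref{eq:OUgenerator} for $\L$ yield $\E(f,g) = \frac{1}{2\nu(M)}\int \nabla f \cdot \nabla g\diff\mu$, which is exactly half the previous quantity, as required.

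There is no real obstacle here: the proof is a short direct calculation, and the only delicate point is the choice of the time scaling $\tfrac{1}{2\nu(M)}$ in the definition \eqref{eq:OU} of the collapsed Ornstein-Uhlenbeck process, which is precisely the factor needed so that the orthonormality of the $e_k$ and the normalisation of the uniform measure $\kappa$ combine to produce the factor $\tfrac12$ demanded by \eqref{eq:deflift2}.
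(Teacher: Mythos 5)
Your proof is correct and follows essentially the same approach as the paper: use $C_\c^\infty(\R^n)$ as a core, observe that $\hatLgamma(f\circ\pi)=e(x)^\top\nabla f(u)$ is independent of $\sigma$, and verify \eqref{eq:deflift1} via $\int_M e_k\diff\nu=0$ and \eqref{eq:deflift2} via orthonormality of the $e_k$. The only cosmetic differences are that you compute the full bilinear form directly where the paper verifies the diagonal and polarises, and you make explicit the implicit requirement $\lambda_k\neq 0$ (needed for $\mu$ to be a probability measure) where the paper only notes that $M$ has no boundary.
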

\begin{proof}
    It suffices to prove the claim for $\sigma = 0$. Firstly, $C_\c^\infty(\R^n)$ forms a core for $(\L,\dom(\L))$ \cite{Wielens1985Selfadjointness}. Hence let $g\in C_\c^\infty(\R^n)$. This implies $g\circ\pi\in\dom(\hatL)$ and
    \begin{equation*}
        \int_M\hatL(g\circ\pi)(u,x)\kappa(\diff x) = \int_Me(x)^\top\nabla f(u)\kappa(\diff x) = 0
    \end{equation*}
    since $M$ has no boundary. Furthermore, since the $e_k$ are orthonormal with respect to $\nu$, 
    \begin{equation*}
        \frac{1}{2}\int_M(\hatL(g\circ\pi)(u,x))^2\kappa(\diff x) = \frac{1}{2}\int_Me(x)^\top\nabla g(u)\nabla g(u)^\top e(x)\kappa(\diff x) = \frac{1}{2\nu(M)}|\nabla g(u)|^2,
    \end{equation*}
    so that 
    \begin{equation*}
        \frac{1}{2}\int_{\R^n\times M}(\hatL(g\circ\pi))^2\diff\hat\mu = \frac{1}{2\nu(M)}\int_{\R^n}|\nabla g|^2\diff\mu = -\int _{\R^n}g\L g\diff\mu\,,
    \end{equation*}
    and \eqref{eq:deflift2} follows by polarisation.
\end{proof}

\subsection{Bounds for relaxation times}\label{ssec:relaxationtimes}

We first note that the representation of the self-repelling diffusions as a second-order lift of the Ornstein-Uhlenbeck process \eqref{eq:OU} directly yields a lower bound on the $L^2$-relaxation time
$$t_{\rel}(\hat P)\ =\ \inf\left\{ t\ge 0 :\| \hat P_tf\|_{L^2(\hat\mu )}\le e^{-1}\| f\|_{L^2(\hat\mu )}\text{ for all }f\in L_0^2(\hat\mu )\,\right\} $$
of the semigroup $(\hat P_t)$.
Here $L_0^2(\hat\mu )$ denotes the orthogonal complements of the constant functions in $L^2(\hat \mu )$. Let us stress that, since $(\hat P_t^{(\sigma)})_{t\geq 0}$ are the transition semigroups of the processes $(U_t,X_t)_{t\geq 0}$ with diffusivity $\sigma$, all relaxation time bounds concern the joint relaxation time of the environment parametrised by $U_t$ and the self-repelling diffusion $X_t$ itself.

\begin{theo}[Lower bound on relaxation time]\label{thm:lowerbound}
For the transition semigroup $(\hat P^{(\sigma)}_t)_{t\geq 0}$ associated to the self-repelling diffusion with diffusivity $\sigma\geq 0$,
   $$ t_{\rel}(\hat P^{(\sigma)})\ \geq\ \left(\frac{\nu(M)}{4\min\bigl\{a_k|\lambda_k| : k\in\{1,\dots,n\}\bigr\}}\right)^{1/2}\,.$$
\end{theo}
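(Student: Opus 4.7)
The plan is to combine two ingredients: (i) the lift structure established in \Cref{thm:lift}, which realises $(\hat P^{(\sigma)}_t)_{t\geq 0}$ as a second-order lift of the Ornstein--Uhlenbeck semigroup $(P_t)_{t\geq 0}$ of \eqref{eq:OU}; and (ii) the generic lower bound on the relaxation time of a second-order lift proved in \cite{EberleLoerler2024Lifts,EGHLM2025Convergence}, which, for every $\sigma\geq 0$, gives
\[
t_{\rel}(\hat P^{(\sigma)})^2 \ \geq\ \frac{1}{8\,\gap(\L)}\,,
\]
where $\L$ is the generator of the collapse. Since $\Phi$ is quadratic, the spectral gap of $\L$ is explicit and is recorded in \eqref{eq:OUgap} as $\gap(\L) = \min\{a_k|\lambda_k|\}/(2\nu(M))$. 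Substituting this into the lift inequality reproduces exactly the stated bound after a one-line algebraic simplification.

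If one wishes to reproduce the lift inequality directly in the present setting rather than invoke it as a black box, I would test with $g := f\circ\pi \in L^2_0(\hat\mu)$ where $f$ is an eigenfunction of $-\L$ associated to the spectral gap $\gamma := \gap(\L)$. The defining identities \eqref{eq:deflift1} and \eqref{eq:deflift2} of a second-order lift yield
\[
\langle \hatLgamma g, g\rangle_{L^2(\hat\mu)} \ =\ 0 \quad\text{and}\quad \|\hatLgamma g\|_{L^2(\hat\mu)}^2 \ =\ 2\gamma\,\|g\|_{L^2(\hat\mu)}^2.
\]
The first identity, combined with non-positivity of the symmetric part of $\hatLgamma$, forces that symmetric part to annihilate $g$, so $(\hatLgamma)^* g = -\hatLgamma g$. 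Writing $\phi(t) := \langle \hat P^{(\sigma)}_t g, g\rangle_{L^2(\hat\mu)}$, I would then verify $\phi(0) = \|g\|^2$, $\phi'(0) = 0$ and, using the contractivity of $(\hat P^{(\sigma)}_t)$, the uniform estimate $|\phi''(t)| \leq \|\hatLgamma g\|^2 = 2\gamma\|g\|^2$ for all $t\geq 0$. A Taylor expansion with integral remainder, combined with the Cauchy--Schwarz estimate $\phi(t) \leq \|\hat P^{(\sigma)}_t g\|\,\|g\|$, then yields a quadratic-in-$t$ lower bound on $\|\hat P^{(\sigma)}_t g\|/\|g\|$ which cannot drop below $e^{-1}$ before $t^2\,\gap(\L)$ reaches a fixed absolute constant.

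The main obstacle is purely numerical bookkeeping: tracking the constant in the abstract lift inequality so as to match the factor $\tfrac{1}{4}$ appearing in $\nu(M)/(4\min\{a_k|\lambda_k|\})$. All the substantive content is already contained in \Cref{thm:lift} together with the explicit Ornstein--Uhlenbeck spectral gap \eqref{eq:OUgap}, so beyond getting the constant right no further work is needed.
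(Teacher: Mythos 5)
Your first paragraph is exactly the paper's proof: cite \Cref{thm:lift}, invoke \cite[Theorem 11]{EberleLoerler2024Lifts} (which states $t_{\rel}(\hat P)\geq\sqrt{t_{\rel}(P)}/(2\sqrt{2})$, equivalent to your $t_{\rel}(\hat P)^2\geq 1/(8\gap(\L))$ by reversibility of the collapse), and substitute the Ornstein--Uhlenbeck spectral gap from \eqref{eq:OUgap}. The supplementary sketch of proving the lift inequality via the Taylor expansion of $t\mapsto\langle\hat P^{(\sigma)}_tg,g\rangle$ at an eigenfunction pull-back is a correct and instructive unpacking of that cited result, but it is not needed for the paper's argument.
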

\begin{proof}
    This is a consequence of \Cref{thm:lift} above and \cite[Theorem 11]{EberleLoerler2024Lifts} which states that $t_{\rel}(\hat P )\ge \sqrt{t_{\rel}(P)}/(2\sqrt 2)$ holds whenever $P=(P_t)_{t\ge 0}$ and $\hat P=(\hat P_t)_{t\ge 0}$ are the transition semigroups of a Markov process and an arbitrary second order lift. By reversibility, the relaxation time of the Ornstein-Uhlenbeck process \eqref{eq:OU} is the inverse of the spectral gap of its generator $\L$.
    Therefore, \eqref{eq:OUgap} yields the claim.
\end{proof}

Obtaining quantitative upper bounds on the relaxation time is a lot more involved. Second-order lifts provide a framework developed in \cite{EberleLoerler2024Lifts,EberleLoerler2024Divergence,EGHLM2025Convergence} for a systematic approach to this problem, based on a variational approach to hypocoercivity first introduced in \cite{Albritton2021Variational,Cao2019Langevin}. Quantitative rates of convergence are established using a flow Poincaré inequality relating the time-averaged $L^2$-norm to the time-averaged energy or Dirichlet form. 
Previous approaches to proving convergence for non-reversible dynamics relied on tricks such as considering Sobolev norms \cite{Villani2009Hypocoercivity}, adding a mixing term to create an equivalent norm in which dissipation takes place \cite{DMS2015Hypocoercivity}, or specifically tailored couplings and twisted metrics \cite{Eberle2019Langevin}.
The strength of the approach using second-order lifts lies in the more intrinsic nature of the time-averaged $L^2$-norms as opposed to such artificially introduced terms \cite{BLW24Hypocoercivity}. We summarise the resulting bounds for the convergence of self-repelling diffusions in the following, and defer their proofs and an overview of the framework to \Cref{sec:convergence}.
Since the approach \cite{DMS2015Hypocoercivity} developed by Dolbeault, Mouhot and Schmeiser was already applied to self-repelling diffusions by Benaïm and Gauthier in \cite{BenaimGauthier2016Selfrepelling}, we compare the result of \Cref{thm:upperbound} to theirs in \Cref{ssec:periodic}.

We let
\begin{equation*}
    \chi_{ijkl} = \int_Me_ie_je_ke_l\diff\nu\quad \text{and}\quad\tilde\chi_{ijkl} = \int_M\langle\nabla_Me_i,\nabla_Me_j\rangle\langle\nabla_Me_k,\nabla_Me_l\rangle|\lambda_i|^{-1}|\lambda_k|^{-1}\diff\nu\,,
\end{equation*}
as well as
\begin{equation*}
    \chi = \Big(\sum_{i,j,k,l=1}^n\chi_{ijkl}^2\Big)^{1/2}\qquad\text{and}\qquad\tilde\chi = \Big(\sum_{i,j,k,l=1}^n\tilde\chi_{ijkl}^2\Big)^{1/2}\,.
\end{equation*}

\begin{theo}[Upper bound on the relaxation time]\label{thm:upperbound}
    The transition semigroup $(\hat P_t^{(\sigma)})_{t\geq0}$ satisfies
    \begin{equation}
        \norm{\hat P_t^{(\sigma)}}_{L^2(\hat\mu)}\ \leq\ e^{-\nu(t-T)}\norm{f}_{L^2(\hat\mu)}\qquad\text{for all }t\geq 0\text{ and }f\in L_0^2(\hat\mu) 
    \end{equation}
    with $T \in O(m^{-1/2})$ and inverse convergence rate
    \begin{equation*}
        \nu^{-1} \in O\left(\sigma^2C_2^2 + \sigma^{-2}\Big(C_1^2 + \frac{1}{\eta}\Big)\right)\,,
    \end{equation*}
    where
    \begin{align*}
        C_1^2 &= 8\nu(M)\left(\chi + 4\tilde\chi + \frac{2\tilde\chi\sum_{k=1}^na_k|\lambda_k|}{\min\bigl\{a_k|\lambda_k| : k\in\{1,\dots,n\}\bigr\}}\right)\,,\\
        C_2^2 &= \frac{4\nu(M)}{\min\bigl\{a_k:k\in\{1,\dots,n\}\bigr\}}\,,\\
        \eta &= \inf\spec(-\Delta_M)\setminus\{0\}\,,\\
        m &= \frac{\min\bigl\{a_k|\lambda_k| : k\in\{1,\dots,n\}\bigr\}}{2\nu(M)}\,.
    \end{align*}
    In particular, there exists a universal constant $C>0$ such that
    \begin{equation*}
        t_\rel(\hat P^{(\sigma)})\ \leq\ C\, \Big(\sigma^2C_2^2 + \sigma^{-2}\Big(C_1^2 + \frac{1}{\eta}\Big)\Big)
    \end{equation*}
    for any choice of diffusivity $\sigma>0$.
\end{theo}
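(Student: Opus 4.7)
My approach is to apply the convergence framework for second-order lifts with additional reversible dissipation developed in \cite{EGHLM2025Convergence}. The key observation is that by \Cref{thm:lift} the generator decomposes as
$\hatLgamma = \hatL + \frac{\sigma^2}{2}\Delta_M$,
where $\hatL$ is antisymmetric on $L^2(\hat\mu)$ and is a second-order lift of the Ornstein--Uhlenbeck generator $\L$, while the perturbation $\frac{\sigma^2}{2}\Delta_M$ is symmetric, non-positive, and acts only in the fibre $M$. This places the problem squarely in the setting of \cite{EGHLM2025Convergence}, whose main theorem produces a convergence estimate of the precise form claimed here: an initial phase $T\in O(\gap(\L)^{-1/2})=O(m^{-1/2})$ followed by exponential contraction at a rate $\nu$ that decomposes additively into a term proportional to $\sigma^2$, dominating for large $\sigma$ when the antisymmetric drift is slow relative to the dissipation, and a term proportional to $\sigma^{-2}$, dominating for small $\sigma$ when convergence is driven by hypocoercive commutators.

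To apply the abstract theorem three hypotheses have to be checked: (i) a spectral gap $m$ of the collapsed Ornstein--Uhlenbeck process, which is already known from \eqref{eq:OUgap}; (ii) a Poincaré inequality on the fibre $(M,\kappa)$ with constant $1/\eta$, valid by the definition of $\eta$ as the smallest nonzero eigenvalue of $-\Delta_M$; and (iii) quantitative commutator bounds between the antisymmetric drift $\hatL$, the dissipative part, and the orthogonal projection in $L^2(\hat\mu)$ onto functions depending only on $u$, whose best constants produce $C_1$ and $C_2$. Ingredients (i) and (ii) are immediate, so the entire substance of the proof is the explicit identification of $C_1$ and $C_2$ in ingredient (iii).

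For this, I would expand $\hatL f$ for test functions $f\in C_\c^2(\R^n\times M)$ and use that the drift
$e(x)^\top\nabla_u - \nabla\Phi(u)^\top\Lambda^{-1}\nabla_Me(x)\cdot\nabla_M$
pairs Gaussian derivatives in $u$ with products of the eigenfunctions $e_k(x)$ and of their Riemannian gradients $\nabla_Me_k(x)$. Evaluating the commutator expressions prescribed in \cite{EGHLM2025Convergence} against $\hat\mu=\mu\otimes\kappa$ then produces, after Cauchy--Schwarz, exactly the structure constants $\chi_{ijkl}=\int_Me_ie_je_ke_l\,\diff\nu$ and $\tilde\chi_{ijkl}=\int_M\langle\nabla_Me_i,\nabla_Me_j\rangle\langle\nabla_Me_k,\nabla_Me_l\rangle|\lambda_i|^{-1}|\lambda_k|^{-1}\,\diff\nu$ from the statement, assembled into $\chi$ and $\tilde\chi$. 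The Gaussian moments of $\nabla\Phi$ and of $u$ under $\mu$ account for the ratio $\sum_k a_k|\lambda_k|/\min_k a_k|\lambda_k|$ appearing in $C_1^2$ and the factor $1/\min_k a_k$ in $C_2^2$. The main obstacle will be this explicit commutator bookkeeping, which combines Gaussian calculus on $\R^n$ with harmonic analysis on $M$ and must be tracked carefully through the $\Lambda^{-1}$ factor to produce the stated coefficients; but once this is done, the bound on $t_\rel(\hat P^{(\sigma)})$ follows immediately by inserting the resulting expressions into the conclusion of \cite{EGHLM2025Convergence}.
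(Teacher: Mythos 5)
Your proposal correctly identifies the paper's strategy: apply the quantitative convergence theorem for second-order lifts with additional reversible dissipation from \cite{EGHLM2025Convergence}, using the decomposition $\hatLgamma = \hatL + \tfrac{\sigma^2}{2}\Delta_M$ together with the spectral gap $m$ of the collapsed Ornstein--Uhlenbeck generator and the spectral gap $\eta$ of $\Delta_M$ on the fibre, and then extract $C_1,C_2$ from quantitative estimates relating $\hatL(g\circ\pi)$ to $\L g$ and $\nabla_M f$. This is exactly the route the paper takes via \Cref{thm:convergencegeneral}.

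However, you explicitly defer the "bookkeeping" that produces $C_1$ and $C_2$, and this is where virtually all the mathematical content lies; moreover, your description of the mechanism is somewhat off. The ratio $\sum_k a_k|\lambda_k|/\min_k a_k|\lambda_k|$ in $C_1^2$ does not come from computing Gaussian moments of $\nabla\Phi$ under $\mu$ directly. Rather, the paper first computes $\hatL^*\hatL(g\circ\pi)$ explicitly, then bounds the term $\int_{\R^n}|\nabla\Phi|^2|\nabla g|^2\diff\mu$ by an integration-by-parts identity (Young's inequality after $\int \Delta\Phi\,|\nabla g|^2 + 2\nabla\Phi^\top\nabla^2 g\,\nabla g$; cf.~\Cref{lem:nablaUnablagbound}) valid for every test function $g$, \emph{not} by Gaussian moment estimates; and it then converts the resulting $\|\nabla^2 g\|_F^2$ and weighted $|\nabla g|^2$ terms into $\|\L g\|_{L^2(\mu)}^2$ via Bochner's identity for the Ornstein--Uhlenbeck generator. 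The bound for $C_2$ likewise requires Bochner. You also need the antisymmetry of $\hatL$ on $L^2(\hat\mu)$ (a separate computation, \Cref{lem:Lhatantisym}) to invoke \cite[Theorem 12]{EGHLM2025Convergence}; this is more than noting that the transport part of a kinetic generator is formally skew, since here it involves an integration by parts on $M$ using $\Delta_M e_k=\lambda_k e_k$. As an outline your proposal points in the right direction, but without the commutator identity behind Lemma \ref{lem:nablaUnablagbound} and the use of Bochner it stops short of the stated constants.
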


Note that compactness of $M$ ensures $\eta>0$. Furthermore, if the Ricci curvature of $M$ is non-negative, the spectral gap satisfies $\eta\geq\frac{\pi^2}{\mathrm{diam}(M)^2}$, see \cite[Theorem 5.5]{Li2012GeometricAnalysis}.

\subsection{The periodic case}\label{ssec:periodic}

As a concrete example, we return to the setting of \Cref{ex:torus} where $M = \T_L = \R/(2\pi L\Z)$ is the circle with circumference $2\pi L$ and choose
\begin{equation}\label{eq:interactionW}
    W(z) = \frac{1}{\pi L}\sum_{k=1}^na_k\cos(kz/L)\,,\qquad z\in\T_L\,.
\end{equation}
Recall that setting $V(x,y) = W(x-y)$ yields
\begin{equation*}
    V(x,y) = \frac{1}{\pi L}\sum_{k=1}^na_k\cos(kx/L)\cos(ky/L)+a_k\sin(kx/L)\sin(ky/L),
\end{equation*}
which is of the form \eqref{eq:VinONS} with the orthonormal functions $e_k(x) = (\pi L)^{-1/2}\cos(kx/L)$ and $f_k(x) =(\pi L)^{-1/2}\sin(kx/L)$. The associated eigenvalues are $\lambda_k = -\frac{k^2}{L^2}$ and the potential $\Phi$ becomes
\begin{equation*}
    \Phi(u,v) = \frac{1}{2L^2}\sum_{k=1}^na_kk^2(u_k^2+v_k^2)\,,\qquad u,v\in\R^n\,,
\end{equation*}
where $u_k$ and $v_k$ correspond to the eigenfunctions $e_k$ and $f_k$. We then have 
\begin{equation*}
    \nu(M) = 2\pi L\qquad\text{and}\qquad\eta = \frac{1}{L^2}\,.
\end{equation*}
The lower bound in \Cref{thm:lowerbound} yields
\begin{equation}\label{eq:lowerboundperiodic}
    t_\rel(\hat P^{(\sigma)})\ \geq\ \left(\frac{\pi L^3}{2\min\bigl\{a_kk^2 : k\in\{1,\dots,n\}\bigr\}}\right)^{1/2}\,. 
\end{equation}
The dependence of the relaxation time on the size parameter $L$ in the limit $L\to\infty$ is of particular interest, as it gives information on the rate at which the process explores its ambient space.

\begin{rema}
    As explained in \Cref{rem:localtime}, in this setting $V_t(x) = (W*L_t)(x)$, where $L_t$ is the local time density of $(X_t)$ at time $t$. The finite-dimensional system \eqref{eq:XtUtSDE} introduced above is hence obtained as the gradient of an approximation of the local time in Fourier domain. Instead considering spatial discretisations of the local time leads to self-repelling random walks on the discrete circle. These can be treated using the framework of second-order lifts in a similar fashion, see \cite{EberleLoerler2025SRW_ECMC}.
\end{rema}

\begin{exam}\label{ex:torus1}
    As a first simple case, consider
    \begin{equation*}
        V(x,y) = a\cos(k(x-y)/L) = \frac{1}{\pi L}\big(a\cos(kx/L)\cos(ky/L) + a\sin(kx/L)\sin(ky/L)\big)
    \end{equation*}
    with $k\in\N$ and $a>0$.
    \Cref{thm:lowerbound} then yields the lower bound
    \begin{equation}\label{eq:lowerboundsimple}
        t_\rel(\hat P^{(\sigma)})\geq  \Big(\frac{\pi L^3}{2ak^2}\Big)^{1/2}.
    \end{equation}
    on the relaxation time.
    Regarding the upper bound on the relaxation time, a direct computation shows that
    \begin{equation*}
        \frac{1}{(\pi L)^2}\int_{0}^{2\pi L}\cos(kx/L)^j\sin(kx/L)^{4-j}\diff x = \begin{cases}
            \frac{1}{4\pi L}&\text{if }j=2\,,\\
            \frac{3}{4\pi L}&\text{if }j\in\{0,4\}\,,\\
            0&\text{if }j\in\{1,3\}\,,
        \end{cases}
    \end{equation*}
    so that $\tilde\chi = \chi = \sqrt{2\cdot (\frac{3}{4\pi L})^2 + 8\cdot (\frac{1}{4\pi L})^2} = \frac{\sqrt{26}}{4\pi L}$. This yields
    \begin{align*}
        C_1^2 &= 16\pi L\cdot\frac{7\sqrt{26}}{4\pi L} = 28\sqrt{26}<143\qquad\text{and}\qquad C_2^2 = \frac{8\pi L}{a}\,,
    \end{align*}
    so that \Cref{thm:upperbound} yields
    \begin{equation}\label{eq:upperboundsimple}
        t_\rel(\hat P^{(\sigma)})\ \leq\ C\Big(\sigma^2\frac{L}{a} + \sigma^{-2}(1+L^2)\Big)\,.
    \end{equation}
    Choosing a diffusivity minimising this upper bound yields
    \begin{equation*}
        t_\rel(\hat P^{(\sigma_*)})\leq C\sqrt{(L+L^3)/a}\qquad\text{with}\quad\sigma^2_*\propto \sqrt{a/L+aL}\,.
    \end{equation*}
    Comparing this to the lower bound \eqref{eq:lowerboundsimple}, we see that, for this choice of $\sigma$, the upper bound is sharp up to a factor of a constant times $k$ in case $L\geq 1$. 
    
    In comparison, \cite[Theorem 6]{BenaimGauthier2016Selfrepelling} yields the upper bound\footnote{The statement of \cite[Theorem 6]{BenaimGauthier2016Selfrepelling} contains an error in the claimed rate $\lambda$. A factor of $\eta$ (the spectral gap, or $\eta_1$ in the notation of that paper) is missing in $K_1$. Note that it is still present in the proof, specifically Proposition 5.}
    \begin{equation*}
        t_\rel(\hat P^{(\sigma)})\ \leq \ C\,L^2\frac{(1+a\lambda)^2}{a^2\lambda^2}\left(\sigma^2\lambda^2 + \lambda\Big(1+\frac{\sqrt{1+a\lambda}}{L}\Big) + \sigma^{-2} \Big(1+\frac{\sqrt{1+a\lambda}}{L}\Big)^2\right)\,,
    \end{equation*}
    where $\lambda=\frac{k^2}{L^2}$. This upper bound is much weaker than the one in \eqref{eq:upperboundsimple} obtained here, both in the limiting regimes $L\to0$ and $L\to\infty$, as well as when choosing a minimising diffusivity. In particular, it cannot achieve the lower bound \eqref{eq:lowerboundsimple}, since when choosing a minimising diffusivity, we have
    \begin{equation*}
        t_\rel(\hat P^{(\sigma)})\ \leq\ C\,L^2\frac{(1+a\lambda)^2}{a^2\lambda^2}\lambda\Big(1+\frac{\sqrt{1+a\lambda}}{L}\Big)\qquad\text{with}\quad\sigma_*^2\propto\lambda^{-1}\Big(1+\frac{\sqrt{1+a\lambda}}{L}\Big)\,.
    \end{equation*}
    Fixing $a$ and $k$, this yields a relaxation time of the order $O(L^4)$ as $L\to\infty$ and the order $O(L^{-2})$ as $L\to 0$.

\end{exam}

\begin{exam}\label{ex:torus2}
    Let us further investigate how the bounds on the relaxation time depend on the parameter $n$ in \eqref{eq:interactionW}. Note that we have $\chi_{ijkl}\not=0$
    if and only if $(i,j,k,l)$ admits a partition into two tuples (not necessarily of the same size) of equal sum, and the same holds for $\tilde\chi_{ijkl}$. Therefore, there exists a universal constant $C$ such that
    \begin{equation*}
        \chi\leq Cn^{3/2}/L\qquad\text{and}\qquad \tilde\chi\leq Cn^3/L\,.
    \end{equation*}
    This yields
    \begin{equation*}
        C_1^2 \leq C\frac{n^3\sum_{k=1}^na_kk^2}{\min\bigl\{a_kk^2:k\in\{1,\dots,n\}\bigr\}}\quad\text{and}\quad C_2^2\leq C\frac{L}{\min\bigl\{a_k:k\in\{1,\dots,n\}\bigr\}}\,,
    \end{equation*}
    and we observe that the dependence on $L$ does not change compared to the simpler case of \Cref{ex:torus1}.
    In particular, if we choose $a_k=1$ for all $k\in \{1,\dots,n\}$, we obtain an upper bound 
    \begin{equation*}
        t_\rel(\hat P^{(\sigma)}) \leq C \left(\sigma^2 L + \sigma^{-2}(n^6+L^2)\right)
    \end{equation*}
    on the relaxation time.
    Similarly, with $a_k=\frac{1}{k^2}$ for all $k\in \{1,\dots,n\}$, we have
    \begin{equation*}
        t_\rel(\hat P^{(\sigma)}) \leq C \left(\sigma^2 Ln^2 + \sigma^{-2}(n^4+L^2)\right).
    \end{equation*}

    Again, in comparison \cite[Theorem 6]{BenaimGauthier2016Selfrepelling} yields the upper bound
    \begin{equation*}
        t_\rel(\hat P^{(\sigma)})\ \leq \ C\,L^2\frac{(1+\Lambda_0)^2}{\Lambda_0^2}\left(\sigma^2\Lambda_1^2 + \Lambda_1\Big(n+\frac{n^3\sqrt{1+\Lambda_1}}{L}\Big) + \sigma^{-2} \Big(n+\frac{n^3\sqrt{1+\Lambda_1}}{L}\Big)^2\right)\,,
    \end{equation*}
    where $\Lambda_0 = \min\bigl\{a_k\frac{k^2}{L^2}: k\in\{1,\dots,n\}\bigr\}$ and $\Lambda_1=\sum_{k=1}^na_k\frac{k^2}{L^2}$. In the two cases mentioned above, we have 
    \begin{align*}
        \Lambda_0 &= \frac{1}{L^2}\,,\qquad\Lambda_1 \leq C\frac{n^3}{L^2}&\text{if }&a_k=1 \text{ for all } k\in \{1,\dots,n\}\,,\\
        \Lambda_0 &= \frac{1}{L^2}\,,\qquad\Lambda_1 =\frac{n}{L^2}&\text{if }&a_k=\frac{1}{k^2} \text{ for all } k\in \{1,\dots,n\}\,,
    \end{align*}
    yielding worse dependence on $n$ in both cases.
\end{exam}

\section{Convergence to equilibrium}\label{sec:convergence}

We consider the family $(\hat P_t^{(\sigma)})_{t\geq 0}$ of semigroups on $L^2(\hat\mu)$ associated to the self-repelling diffusion \eqref{eq:XtUtSDE} with diffusivity $\sigma\geq 0$. Recall that their generators admit the decomposition
\begin{equation*}
    \hatLgamma f\ =\ \hatL f\ +\ \frac{\sigma^2}{2}\Delta_Mf\qquad\text{for all }f\in\dom(\hatLgamma)
\end{equation*}
given in \eqref{eq:generator}. Convergence to equilibrium of these semigroups can be shown using a flow Poincaré inequality, which can be established for second-order lifts using the framework of \cite{EberleLoerler2024Lifts,EGHLM2025Convergence}. In our setting, the general framework reduces to \Cref{thm:convergencegeneral}. Recall that $(\E,\dom(\E))$ denotes the Dirichlet form \eqref{eq:DiriForm} associated to $\L$, and let
\begin{equation*}
    \Pi f(u,x)\ =\ \frac{1}{\nu(M)}\int_Mf(u,y)\nu(\diff y)\ =\ \int_Mf(u,y)\kappa(\diff y)\,.
\end{equation*}

\begin{theo}\label{thm:convergencegeneral}
    Assume that
    \begin{enumerate}[(i)]
        \item the operator $(\hatL,\dom(\hatL))$ is a second-order lift of $(\L,\dom(\L))$ such that $g\circ\pi\in\dom(\hatL^*)$ and $\hatL^*(g\circ\pi) = -\hatL(g\circ\pi)$ for all $g\in\dom(\L)$;

        \item the operator $(\L,\dom(\L))$ has purely discrete spectrum on $L^2(\mu)$ and a spectral gap $m>0$, i.e.\ 
        \begin{equation*}
            \norm{g-\mu(g)}_{L^2(\mu)}^2\ \leq \ \frac{1}{m}\E(g)
        \end{equation*}
        for all $f\in\dom(\E)$;

        \item there exist constants $C_1,C_2>0$ such that
        \begin{align}
            \big\langle \hatL(g\circ\pi),\hatL(f-\Pi f)\big\rangle_{L^2(\hat\mu)}\ &\leq\ C_1\norm{\L g}_{L^2(\mu)}\norm{\nabla_Mf}_{L^2(\hat\mu)}\,,\label{eq:C1}\\
            \big\langle \hatL(g\circ\pi),\Delta_M f\big\rangle_{L^2(\hat\mu)}\ &\leq\ C_2\norm{\L g}_{L^2(\mu)}\norm{\nabla_Mf}_{L^2(\hat\mu)}\label{eq:C2}
        \end{align}
        for all $f\in\dom(\hatL)$ with $\Pi f\in\dom(\hatL)$ and $g$ in a core of $\L$;

        \item the Laplace-Beltrami operator $\Delta_M$ on $M$ has a spectral gap
        \begin{equation*}
            \eta = \inf\spec(-\Delta_M)\setminus\{0\}>0\,.
        \end{equation*}
    \end{enumerate}
    Then there exists a universal constant $C > 0$ such that, for any $T>0$ and $\sigma>0$, 
    the semigroup $(\hat P_t^{(\sigma)})$ generated by $(\hatLgamma,\dom(\hatLgamma))$ satisfies
    \begin{equation*}
        \lVert\hat P_t^{(\sigma)}f\lVert_{L^2(\hat\mu)}\ \leq\ e^{-\nu(t-T)}\lVert f\rVert_{L^2(\hat\mu)}\qquad\text{for all }t\geq 0\text{ and }f\in L_0^2(\hat\mu),
    \end{equation*}
    where the rate $\nu$ is given by 
    \begin{equation}\label{eq:rategeneral}
        \nu \ =\ C\, \frac{\sigma^2}{\sigma^4C_2^2 + C_1^2 + \frac{1}{\eta}(1+\frac{1}{mT^2})}\,.
    \end{equation}
\end{theo}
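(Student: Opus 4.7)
The plan is to follow the variational approach to hypocoercivity for second-order lifts developed in \cite{EberleLoerler2024Lifts, EGHLM2025Convergence}, which proceeds by establishing a space-time (or flow) Poincaré inequality for the semigroup $(\hat P_t^{(\sigma)})$ and combining it with an energy identity. Assumption (i) guarantees that the symmetric part of $\hatL^{(\sigma)} = \hatL + \frac{\sigma^2}{2}\Delta_M$ acts purely by the Laplacian, so that for $f \in L^2_0(\hat\mu)$ and $f_t = \hat P_t^{(\sigma)}f$ one has the energy identity $\tfrac{1}{2}\tfrac{d}{dt}\lVert f_t\rVert_{L^2(\hat\mu)}^2 = -\tfrac{\sigma^2}{2}\lVert \nabla_M f_t\rVert_{L^2(\hat\mu)}^2$. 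Once an inequality of the form $\int_0^T\lVert f_t\rVert_{L^2(\hat\mu)}^2\,dt \leq K\int_0^T\lVert\nabla_M f_t\rVert_{L^2(\hat\mu)}^2\,dt$ with $K = O(\sigma^2\nu^{-1})$ is shown, combining it with the energy identity and iterating over time windows of length $T$ produces the claimed exponential decay with rate $\nu$ and prefactor $e^{\nu T}$.

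The key task is to construct the space-time Poincaré inequality. First I would decompose $f_t = \Pi f_t + (f_t - \Pi f_t)$; the vertical part is immediately controlled by assumption (iv), giving $\lVert f_t - \Pi f_t\rVert_{L^2(\hat\mu)}^2 \leq \eta^{-1}\lVert \nabla_M f_t\rVert_{L^2(\hat\mu)}^2$. For the horizontal part I further split $\Pi f_t = \bar f + (\Pi f_t - \bar f)$, where $\bar f = T^{-1}\int_0^T \Pi f_s\,ds$. The fluctuation around this time average is handled by a one-dimensional Poincaré inequality in $t$, which bounds $T^{-1}\int_0^T\lVert \Pi f_t - \bar f\rVert^2\,dt$ by a multiple of $T^2$ times the time average of $\lVert \partial_t \Pi f_t\rVert^2 = \lVert \Pi\hatL f_t\rVert^2$ (note $\Pi\Delta_M = 0$); using (ii) together with \eqref{eq:deflift2} this produces the contribution $(mT^2)^{-1}\lVert\nabla_M f_t\rVert^2$. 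To control $\lVert\bar f\rVert^2$ I would pick $g\in\dom(\L)$ solving the Poisson equation $-\L g = \bar f$, which is possible since $\mu(\bar f)=\hat\mu(f)=0$ and $\L$ is invertible on $L^2_0(\mu)$ by (ii), with $\lVert g\rVert_{L^2(\mu)}\leq m^{-1}\lVert\bar f\rVert_{L^2(\mu)}$. Then, invoking the antisymmetry $\hatL^*(g\circ\pi) = -\hatL(g\circ\pi)$ from (i), the lift identity, and the equation $\partial_t f_t = \hatL f_t + \tfrac{\sigma^2}{2}\Delta_M f_t$, I would rewrite $\lVert\bar f\rVert^2$ as a boundary term of the form $T^{-1}\langle g\circ\pi,\, f_0 - f_T\rangle_{L^2(\hat\mu)}$ (estimated by Cauchy--Schwarz and the bound on $\lVert g\rVert_{L^2(\mu)}$) plus time-integrals of $\langle \hatL(g\circ\pi), \hatL(f_t - \Pi f_t)\rangle$ and $\sigma^2\langle \hatL(g\circ\pi), \Delta_M f_t\rangle$. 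These are precisely the pairings controlled by the assumptions \eqref{eq:C1} and \eqref{eq:C2}, so Cauchy--Schwarz in $t$ and Young's inequality absorb $\lVert\L g\rVert^2 = \lVert\bar f\rVert^2$ on the left and leave a bound of the form $(C_1^2 + \sigma^4 C_2^2)\,T^{-1}\int_0^T\lVert \nabla_M f_t\rVert^2\,dt$, plus a small multiple of $\lVert f\rVert^2/T^2$ from the boundary term.

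Summing the three contributions and balancing the Young's constants gives the space-time Poincaré inequality with $K = C\,\sigma^{-2}\bigl(C_1^2 + \sigma^4 C_2^2 + \eta^{-1}(1 + (mT^2)^{-1})\bigr)$, which after iteration in time produces the rate formula \eqref{eq:rategeneral}; choosing $T$ of order $m^{-1/2}$ then equalises the last two contributions in the denominator and gives the prefactor of $e^{\nu T}$. \textbf{The main obstacle} will be the careful bookkeeping in the integration by parts expanding $\int_0^T\langle g\circ\pi, \hatL f_t\rangle\,dt$: one must use \eqref{eq:deflift1} to discard the purely horizontal piece of $\hatL f_t$ (so that only terms in which $f_t$ appears through $f_t - \Pi f_t$ survive and can be paired with the gradient) and cleanly separate the $\hatL$-part from the $\tfrac{\sigma^2}{2}\Delta_M$-part of $\partial_t f_t$ so that \eqref{eq:C1} and \eqref{eq:C2} apply with the correct powers of $\sigma$. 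Tuning the Young's constants to recover \eqref{eq:rategeneral} exactly is then a standard but combinatorially tedious optimisation.
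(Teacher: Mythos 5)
Your sketch is correct and follows precisely the flow Poincar\'e argument from \cite{EGHLM2025Convergence}: the paper itself proves this theorem simply by invoking \cite[Theorem~12]{EGHLM2025Convergence}, and what you have written is a faithful reconstruction of that reference's proof strategy (energy identity from antisymmetry of $\hatL$, the three-way orthogonal decomposition $f_t = (f_t-\Pi f_t) + (\Pi f_t - \bar f) + \bar f$ controlled respectively by the vertical spectral gap $\eta$, a one-dimensional Poincar\'e inequality in time, and the Poisson equation for $\L$ combined with assumptions \eqref{eq:C1}--\eqref{eq:C2}, followed by Young's-inequality bookkeeping and iteration over time windows of length $T\asymp m^{-1/2}$). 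You have correctly identified where each hypothesis enters and where the resulting powers of $\sigma$, $C_1$, $C_2$, $\eta^{-1}$ and $(mT^2)^{-1}$ in \eqref{eq:rategeneral} come from.
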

\begin{proof}
    The statement is a direct consequence of \cite[Theorem 12]{EGHLM2025Convergence}.
\end{proof}

We will now apply this result to prove the upper bound in \Cref{thm:upperbound} on the relaxation time of self-repelling diffusions.

\begin{lemm}\label{lem:nablaUnablagbound}
    For any $g\in C_\c^\infty(\R^n)$,
    \begin{align*}
        \int_{\R^n}|\nabla\Phi|^2|\nabla g|^2\diff\mu\ \leq\ 2\left(\sum_{k=1}^na_k|\lambda_k|\right)\int_{\R^n}|\nabla g|^2\diff\mu\ +\ 4\int_{\R^n}\|\nabla^2g\|_F^2\diff\mu\,,
    \end{align*}
    where $\norm{\nabla^2g}_F$ denotes the Frobenius norm of $\nabla^2g$.
\end{lemm}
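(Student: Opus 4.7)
The inequality has the flavour of an integrated Gamma-type identity for a Gaussian measure, and the plan is to prove it by a single integration by parts followed by Young's inequality.

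The starting observation is that, since $\mu$ has density proportional to $e^{-\Phi}$, integration by parts yields the standard identity
\begin{equation*}
    \int_{\R^n}(\nabla\Phi\cdot F)\diff\mu\ =\ \int_{\R^n}(\divg F)\diff\mu
\end{equation*}
for any smooth, compactly supported vector field $F\colon\R^n\to\R^n$. I would apply this to the vector field $F = |\nabla g|^2\,\nabla\Phi$, noting that $\nabla\Phi\cdot F = |\nabla\Phi|^2|\nabla g|^2$ and that $\divg F = |\nabla g|^2\Delta\Phi + 2\,\nabla\Phi^\top\nabla^2 g\,\nabla g$. Since $\Phi$ is the quadratic $\Phi(u)=\frac12\sum_k a_k|\lambda_k|u_k^2$, its Laplacian is the constant $\Delta\Phi = \sum_{k=1}^n a_k|\lambda_k|$. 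This gives the exact identity
\begin{equation*}
    \int_{\R^n}|\nabla\Phi|^2|\nabla g|^2\diff\mu\ =\ \Big(\sum_{k=1}^n a_k|\lambda_k|\Big)\int_{\R^n}|\nabla g|^2\diff\mu\ +\ 2\int_{\R^n}\nabla\Phi^\top\nabla^2 g\,\nabla g\diff\mu\,.
\end{equation*}

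The cross term is controlled pointwise by $|\nabla\Phi^\top\nabla^2 g\,\nabla g|\leq |\nabla\Phi|\,\|\nabla^2g\|_F\,|\nabla g|$ (Cauchy--Schwarz, since the operator norm of $\nabla^2 g$ is bounded by its Frobenius norm). Young's inequality in the form $2ab\leq \tfrac12 a^2 + 2b^2$ with $a = |\nabla\Phi|\,|\nabla g|$ and $b = \|\nabla^2g\|_F$ then yields
\begin{equation*}
    2\int_{\R^n}\nabla\Phi^\top\nabla^2 g\,\nabla g\diff\mu\ \leq\ \frac12\int_{\R^n}|\nabla\Phi|^2|\nabla g|^2\diff\mu\ +\ 2\int_{\R^n}\|\nabla^2g\|_F^2\diff\mu\,.
\end{equation*}
Substituting this into the identity above, absorbing the $\frac12\int|\nabla\Phi|^2|\nabla g|^2\diff\mu$ term into the left-hand side and multiplying through by $2$ gives exactly the claimed inequality.

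The proof is essentially a calculation; there is no real obstacle, but care is needed to justify the integration by parts. Since $g\in C_\c^\infty(\R^n)$, the vector field $F = |\nabla g|^2\nabla\Phi$ has polynomial growth that is absorbed by the Gaussian weight $e^{-\Phi}$, so all boundary terms vanish and the identity is rigorous.
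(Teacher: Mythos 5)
Your proof is correct and follows essentially the same route as the paper: integration by parts against the Gaussian weight to trade $|\nabla\Phi|^2|\nabla g|^2$ for $\Delta\Phi|\nabla g|^2$ plus the cross term $2\nabla\Phi^\top\nabla^2 g\,\nabla g$, followed by Young's inequality with the same weight $p=1/2$ to absorb half of the left-hand side. The only cosmetic difference is that you spell out $\Delta\Phi=\sum_k a_k|\lambda_k|$ and the Cauchy--Schwarz step $|\nabla\Phi^\top\nabla^2 g\,\nabla g|\le|\nabla\Phi|\,\|\nabla^2 g\|_F\,|\nabla g|$ slightly more explicitly.
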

\begin{proof}
    Using integration by parts, we estimate
    \begin{align*}
        \int_{\R^n}|\nabla\Phi|^2|\nabla g|^2\diff\mu &= \int_{\R^n}\Delta\Phi|\nabla g|^2\diff\mu + 2\int_{\R^d}\nabla\Phi^\top\nabla^2g\nabla g\diff\mu\,.
    \end{align*}
    Now,
    \[ 2\nabla\Phi^\top\nabla ^2 g\nabla g\ \leq\ 2|\nabla\Phi|\, |\nabla g|\, \|\nabla^2g\|_F\ \leq\ p|\nabla\Phi|^2|\nabla g|^2 + \frac{1}{p}\|\nabla ^2g\|_F^2 \]
    for any $p>0$ by Young's inequality.
    Choosing $p=\frac{1}{2}$ and rearranging finishes the proof.
\end{proof}
The inequality in \Cref{lem:nablaUnablagbound} can in fact immediately be sharpened by replacing the Frobenius norm with the spectral norm of $\nabla^2g$. The Frobenius norm is, however, more conveniently bounded by Bochner's identity which we will use below.

\begin{lemm}\label{lem:Lhatantisym}
    The operator $(\hatL,C_\c^\infty(\R^n\times M))$ is antisymmetric, i.e.\
    \begin{equation*}
        \int_{\R^n\times M}f\cdot \hatL g\diff\hat\mu = -\int_{\R^n\times M}\hatL f \cdot g\diff\mu
    \end{equation*}
    for any $f,g\in C_\c^\infty(\R^n\times M)$.
\end{lemm}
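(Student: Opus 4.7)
The plan is to exploit the special structure of $\hatL=\hatLgamma|_{\sigma=0}$. Inspecting the explicit formula \eqref{eq:generator}, we see that when $\sigma=0$ the generator contains only first-order differential operators, namely $e(x)^\top\nabla$ acting on the $u$-variable and $-\langle\nabla\Phi(u)^\top\Lambda^{-1}\nabla_M e(x),\nabla_M\cdot\rangle$ acting on the $x$-variable, with smooth coefficients by \Cref{assu:V}. As a first-order differential operator, $\hatL$ therefore satisfies the Leibniz rule
\[
    \hatL(fg)\ =\ f\,\hatL g\ +\ g\,\hatL f\qquad\text{for all }f,g\in C_\c^\infty(\R^n\times M),
\]
which is the first ingredient.

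The second ingredient is the invariance of $\hat\mu$ under the $\sigma=0$ dynamics, from \Cref{lem:invariant}(ii). Invariance means that the semigroup $(\hat P_t^{(0)})_{t\ge 0}$ preserves the integral against $\hat\mu$, and differentiating $t\mapsto\int\hat P_t^{(0)}h\diff\hat\mu$ at $t=0$ yields $\int\hatL h\diff\hat\mu=0$ for every $h\in\dom(\hatL)$, and in particular for every $h\in C_\c^\infty(\R^n\times M)$ since this space is contained in the core $C_\c^2(\R^n\times M)$ established in \cite[Propositions 2 and 3]{BenaimGauthier2016Selfrepelling}. Applying this to $h=fg$ and using the Leibniz rule,
\[
    0\ =\ \int\hatL(fg)\diff\hat\mu\ =\ \int f\,\hatL g\diff\hat\mu\ +\ \int g\,\hatL f\diff\hat\mu,
\]
which is precisely the claimed antisymmetry.

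I do not expect any serious obstacle; the only points to check are the smoothness of the coefficients and the inclusion $C_\c^\infty\subset\dom(\hatL)$, both of which are immediate. As a self-contained alternative, one can give a direct proof by integration by parts: split $\int f\,\hatL g\diff\hat\mu$ into its two contributions, integrate by parts in $u$ against the Gaussian density $e^{-\Phi}/Z$ (producing the extra term $\int fg\,e(x)^\top\nabla\Phi(u)\diff\hat\mu$), and integrate by parts in $x$ on the closed manifold $M$ using $\divg_M(f\nabla_M e_k)=f\Delta_M e_k+\langle\nabla_M f,\nabla_M e_k\rangle$ together with $\Delta_M e_k=\lambda_k e_k$ and $\lambda_k\leq 0$. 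The two Gaussian-induced and eigenvalue-induced boundary terms match and cancel, leaving exactly $-\int g\,\hatL f\diff\hat\mu$, recovering the conclusion without invoking invariance.
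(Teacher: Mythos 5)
Your primary argument is correct, and it is a genuinely different and more conceptual route than the paper's. The paper proves antisymmetry by a direct integration by parts: it computes the adjoint $\partial_{u_k}^*$ with respect to the Gaussian factor $\mu$, applies the divergence theorem on the closed manifold $M$, uses $\Delta_M e_k = \lambda_k e_k$, and checks that the extra terms produced by the two integrations by parts cancel. You instead observe that $\hatL=\hatL^{(0)}$ is a first-order differential operator with smooth coefficients and therefore a derivation satisfying the Leibniz rule, and that invariance of $\hat\mu$ under the $\sigma=0$ flow (part (ii) of \Cref{lem:invariant}) gives $\int\hatL h\diff\hat\mu=0$ for $h$ in the domain; applying this to $h=fg$ yields the claim immediately. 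This is a clean illustration of the general principle that a vector field is divergence-free with respect to a measure if and only if the associated first-order operator is antisymmetric on $L^2$ of that measure. What your argument buys is economy and transparency, at the price of outsourcing the hard work to the cited invariance result of Benaïm and Gauthier; what the paper's computation buys is self-containedness and, more importantly, it makes explicit exactly which cancellations occur (between the Gaussian correction $a_k|\lambda_k|u_k$ and the eigenvalue relation $\Delta_M e_k=\lambda_k e_k$), which is the structural reason $\hat\mu$ is invariant in the first place. Note also that you have correctly identified the alternative direct-integration-by-parts argument in your last paragraph — that is essentially the paper's proof. The only minor point to keep in mind is that equating ``invariance of $\hat\mu$'' with ``$\int\hatL h\diff\hat\mu=0$ for $h\in\dom(\hatL)$'' and the antisymmetry of a derivation are, for the $\sigma=0$ dynamics, logically very close to the same statement; your argument is not circular, since the invariance is a cited external result, but it is worth being aware that you are leaning on it heavily rather than proving something genuinely new from scratch.
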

\begin{proof}
    Note that the adjoint of $\partial_{u_k}$ with respect to $\mu$ is given by $\partial_{u_k}^*f(u) = -\partial_{u_k}f(u)+a_k|\lambda_k|u_kf(u)$.
    An integration by parts thus yields
    \begin{align*}
        \MoveEqLeft\int_{\R^n\times M}f\cdot \hatL g\diff\hat\mu = \int_{\R^n\times M}f(u,x)\sum_{k=1}^n\big(e_k(x)\partial_{u_k}g(u,x) - a_ku_k\langle\nabla_Me_k(x),\nabla_Mg(u,x)\rangle\big)\diff\hat\mu\\
        &=\sum_{k=1}^n\int_{\R^n\times M}\bigl(-\partial_{u_k}f(u,k)e_k(x)+a_k|\lambda_k|u_k f(u,x)g(u,x)\big)\diff\hat\mu\\
        &\qquad+\sum_{k=1}^n\int_{\R^n\times M}a_ku_kg(u,x)\nabla_M\cdot (\nabla_M e_k(x)f(u,x))\diff\hat\mu\\
        &=\sum_{k=1}^n\int_{\R^n\times M}\bigl(-\partial_{u_k}f(u,k)e_k(x)+a_k|\lambda_k|u_k f(u,x)g(u,x)\big)\diff\hat\mu\\
        &\qquad+\sum_{k=1}^n\int_{\R^n\times M}a_ku_kg(u,x)\big(\Delta_Me_k(x)f(u,x) + \langle\nabla_Me_k(x),\nabla_Mf(u,x)\rangle\big)\diff\hat\mu\\
        &=-\int_{\R^n\times M}\hatL f\cdot g\diff\hat\mu\,.
    \end{align*}
\end{proof}

\begin{proof}[Proof of \Cref{thm:upperbound}]
    The claim follows from \Cref{thm:convergencegeneral} if we can show that conditions (i)--(iv) are satisfied with $m$, $C_1$, and $C_2$ as claimed.
    To this end, condition (i) is satisfied by \Cref{thm:lift} and \Cref{lem:Lhatantisym}, and condition (ii) is satisfied with
    \begin{equation*}
        m = \frac{\min\bigl\{a_k|\lambda_k| : k\in\{1,\dots,n\}\bigr\}}{2\nu(M)}
    \end{equation*}
    by \eqref{eq:OUgap}. We turn to condition (iii) and the constant $C_1$. For $g\in C_\c^\infty(\R^n)$ we have $\hatL(g\circ\pi)\in\dom(\hatL^*)$ and
    \begin{align*}
        \hatL^*\hatL(g\circ\pi)(u,x) = -e(x)^\top \nabla ^2g(u)e(x)+\nabla\Phi(u)^\top\Lambda^{-1}\langle\nabla_Me(x),\nabla_Me(x)^\top\rangle\nabla g(u)\,,
    \end{align*}
    where the term $\langle\nabla_Me(x),\nabla_Me(x)^\top\rangle$ is to be understood as
    \begin{equation*}
        \langle\nabla_Me(x),\nabla_Me(x)^\top\rangle = (\langle\nabla_Me_k(x),\nabla_Me_l(x)\rangle)_{k,l=1}^n\in\R^{n\times n}\,.
    \end{equation*}
    Then
    \begin{align*}
        \MoveEqLeft\int_M(\hatL^*\hatL(g\circ\pi)(u,x))^2\kappa(\diff x)\\
        &\leq2\int_M\left(e(x)^\top\nabla^2g(u)e(x)\right)^2+\left(\nabla\Phi(u)^\top\Lambda^{-1}\langle\nabla_Me(x),\nabla_Me(x)^\top\rangle\nabla g(u)\right)^2\kappa(\diff x)\\
        & = \frac{2}{\nu(M)}\sum_{i,j,k,l=1}^n\partial_{ij}g(u)\partial_{kl}g(x)\chi_{ijkl} + \frac{2}{\nu(M)}\sum_{i,j,k,l=1}^n\partial_{i}\Phi(u)\partial_jg(u)\partial_{k}\Phi(u)\partial_lg(u)\tilde\chi_{ijkl}\\
        &\leq \frac{2}{\nu(M)}\left(\norm{\nabla^2g(u)}_F^2\chi + |\nabla\Phi(u)|^2|\nabla g(u)|^2\tilde\chi\right)
    \end{align*}
    By \Cref{lem:nablaUnablagbound}, we thus obtain
    \begin{align*}
        \MoveEqLeft\int_{\R^n\times M}(\hatL^*\hatL(g\circ\pi))^2\diff\hat\mu \leq\frac{2\chi}{\nu(M)}\int_{\R^n}\norm{\nabla ^2g}_F^2\diff\mu + \frac{2\tilde\chi}{\nu(M)}\int_{\R^n}|\nabla\Phi|^2|\nabla g|^2\diff\mu\\
        &\leq \frac{2\chi + 8\tilde\chi}{\nu(M)}\int_{\R^n}\norm{\nabla^2g}_F^2\diff\mu + \frac{4\tilde\chi\sum_{k=1}^na_k|\lambda_k|}{\nu(M)}\int_{\R^n}|\nabla g|^2\diff\mu\,.
    \end{align*}
    Bochner's identity yields
    \begin{equation}\label{eq:Bochner}
        \int_{\R^n}(\nabla^*\nabla g)^2\diff\mu = \int_{\R^n}\norm{\nabla ^2g}_F^2\diff\mu + \int_{\R^n}\sum_{k=1}^na_k|\lambda_k|(\partial_{u_k}g)^2\diff\mu\,,
    \end{equation}
    so that, since $\L = -\frac{1}{2\nu(M)}\nabla^*\nabla$ by \eqref{eq:OUgenerator},
    \begin{equation*}
        \int_{\R^n\times M}(\hatL^*\hatL(g\circ\pi))^2\diff\hat\mu \leq C_1^2\int_{\R^n}(\L g)^2\diff\mu
    \end{equation*}
    with
    \begin{equation*}
        C_1^2 = 8\nu(M)\left(\chi + 4\tilde\chi + \frac{2\tilde\chi\sum_{k=1}^na_k|\lambda_k|}{\min\bigl\{a_k|\lambda_k| : k\in\{1,\dots,n\}\bigr\}}\right)\,.
    \end{equation*}
    Therefore, \eqref{eq:C1} is satisfied with this choice of $C_1$ by the Cauchy-Schwarz inequality.

    Turning to the constant $C_2$, integration by parts, the Cauchy-Schwarz inequality and orthonormality of the $e_k$ yields
    \begin{align*}
        \MoveEqLeft\langle\hatL(g\circ\pi),\Delta_Mf\rangle_{L^2(\hat\mu)} = \int_{\R^n\times M}\sum_{k=1}^ne_k(x)\partial_{u_k}g(u)\Delta_Mf(u,x)\,\hat\mu(\diff u\diff x)\\
        &=\int_{\R^n\times M}\Big\langle\sum_{k=1}^n\partial_{u_k}g(u)\nabla_Me_k(x),\nabla_Mf(u,x)\Big\rangle\,\hat\mu(\diff u\diff x)\\
        &\leq \Big(\int_{\R^n\times M}\Big|\sum_{k=1}^n\partial_{u_k}g(u)\nabla_Me_k(x)\Big|^2\hat\mu(\diff u\diff x)\Big)^{1/2}\Big(\int_{\R^n\times M}|\nabla_Mf|^2\diff\hat\mu\Big)^{1/2}\\
        &=\frac{1}{\sqrt{\nu(M)}}\Big(\int_{\R^n}\sum_{k=1}^n|\lambda_k|(\partial_{u_k}g)^2\diff\mu\Big)^{1/2}\norm{\nabla_Mf}_{L^2(\hat\mu)}\,.
    \end{align*}
    By Bochner's identity \eqref{eq:Bochner} we thus obtain
    \begin{equation*}
        \langle\hatL(g\circ\pi),\Delta_Mf\rangle_{L^2(\hat\mu)}  \leq C_2\norm{\L g}_{L^2(\mu)}\norm{\nabla_Mf}_{L^2(\hat\mu)}
    \end{equation*}
    with
    \begin{equation*}
        C_2^2 = \frac{4\nu(M)}{\min\bigl\{a_k:k\in\{1,\dots,n\}\bigr\}}\,.
    \end{equation*}

\end{proof}

\section*{Acknowledgements}
The author was funded by the Deutsche Forschungsgemeinschaft (DFG, German Research Foundation) under Germany’s Excellence Strategy EXC 2047 -- 390685813 as well as under CRC 1720 -- 539309657. I would like to thank Andreas Eberle, Leo Hahn, and Michel Benaïm for helpful discussions and comments, and Michel Benaïm for funding a stay in Neuch\^atel in October 2025.

\printbibliography

\end{document}